\newtheorem{theorem}{Theorem}%[section]
\newtheorem{lemma}[theorem]{Lemma}
\newtheorem{corollary}[theorem]{Corollary}
\newtheorem{proposition}[theorem]{Proposition}
\theoremstyle{definition}
\theoremstyle{remark}
\newcommand{\abs}[1]{\lvert#1\rvert}
\newcommand{\nm}[1]{\lVert#1\rVert}
\newcommand{\D}{\mathbb{D}}
\newcommand{\Z}{\mathbb{Z}}
\newcommand{\C}{\mathbb{C}}
\renewcommand{\phi}{\varphi}
\DeclareMathOperator{\dist}{dist}
\newcommand{\BMOA}{\rm BMOA}
\begin{document}

% \title[short text for running head]{full title}
\title[Solutions having zeros on pre-given sequences]{Solutions of complex differential equation having zeros on pre-given sequences}
\thanks{The author is supported  in part by the Academy of Finland \#286877 and
the Finnish Academy of Science and Letters (Vilho, Yrj\"o and Kalle V\"ais\"al\"a Foundation).}

\author{Janne Gr\"ohn}
\address{Department of Physics and Mathematics, University of Eastern Finland\\ 
\indent P.O. Box 111, FI-80101 Joensuu, Finland}
\email{janne.grohn@uef.fi}

\date{\today}

\subjclass[2010]{Primary 34C10; Secondary 30D45}
% 34C10 Oscillation theory, zeros, disconjugacy and comparison theory
% 30D45 Bloch functions, normal functions, normal families
\keywords{Blaschke sequence, linear differential equation, normal function, oscillation of solution, prescribed zeros}

\begin{abstract}
Behavior of solutions of $f''+Af=0$ is discussed under the assumption
that $A$ is analytic in $\D$ and $\sup_{z\in\D}(1-|z|^2)^2|A(z)|<\infty$, where $\D$ is the unit disc of
the complex plane. As a main result it is shown that
such differential equation may admit a non-trivial solution whose zero-sequence does not satisfy the
Blaschke condition. This gives an answer to an open question in the literature.

It is also proved that $\Lambda\subset\D$ is the zero-sequence 
of a non-trivial solution of $f''+Af=0$
where $|A(z)|^2(1-|z|^2)^3\, dm(z)$ is a Carleson measure if and only if $\Lambda$ is uniformly separated.
As an application an~old result, according to which there exists a non-normal function
which is uniformly locally univalent, is improved.
\end{abstract}

\maketitle

%%%%%%%%%%%%%%%%%%%%%%%
%%%% ---- SECTION ---- %%%%
%%%%%%%%%%%%%%%%%%%%%%%

\section{Introduction}
 
Let $\mathcal{H}(\D)$ be the collection of analytic functions in the unit
disc $\D$ of the complex plane $\C$. This research concerns 
zero-sequences of non-trivial solutions ($f\not\equiv 0$) of 
\begin{equation} \label{eq:de2}
  f''+Af=0
\end{equation}
under the assumption $A\in H^\infty_2$, which means that
$A\in\mathcal{H}(\D)$ and $\nm{A}_{H^\infty_2} = \sup_{z\in\D} \,(1-|z|^2)^2 |A(z)|<\infty$.
In particular, we are interested in the following question:
\begin{itemize}
\item[\rm (Q)]
  Is it true that the zero-sequence $\{z_n\}$ of any non-trivial solution of \eqref{eq:de2}
  satisfies the Blaschke condition $\sum_n (1-|z_n|)<\infty$ if $A\in H^\infty_2$?
\end{itemize}

Question (Q) relates to so-called Blaschke-oscillatory equations, and appears in 
\cite[pp.~61--62]{H:2013}. Note that the characterization \cite[Lemma~3]{H:2013}
of Blaschke-oscillatory equations does not provide an immediate answer to (Q). 
Moreover, it is known that all non-trivial solutions of \eqref{eq:de2}
may lie outside the Nevanlinna class $N$, even if $A\in H^\infty_2$ \cite[pp.~57--58]{H:2013}.

We will make repeated use of \cite[Theorem~6.1]{L:1993}, which connects non-trivial 
solutions of \eqref{eq:de2} to a locally univalent meromorphic function in $\D$.
If $\nm{A}_{H^\infty_2}\leq 1$ then all non-trivial solutions of~\eqref{eq:de2}
vanish at most once in~$\D$ by \cite[Theorem~I]{N:1949}, and
if $\nm{A}_{H^\infty_2} >1$ then non-trivial solutions may have infinitely many zeros
\cite{H:1949}. The condition $A\in H^\infty_2$
is equivalent to the fact that zero-sequences of non-trivial solutions of \eqref{eq:de2} are
separated with respect to the pseudo-hyperbolic metric \cite[Theorems~3--4]{S:1955},
by a~constant depending on $\nm{A}_{H^\infty_2}$,
and hence zero-sequences \emph{almost} satisfy the Blaschke condition \cite[p.~162]{DS:2004}.
Many sufficient coefficient conditions implying 
an~affirmative answer to (Q) are known. For coefficient conditions placing all solutions 
of \eqref{eq:de2} to the Nevanlinna class, see \cite{H:2000, P:1982},
and for coefficient conditions placing all solutions to some Hardy space, see \cite{GHR:preprint,GNR:preprint,P:1982,R:2007}.
For a~more direct approach to zero-sequences of solutions of \eqref{eq:de2}, see \cite{GN:preprint}.

%%%%%%%%%%%%%%%%%%%%%%%
%%%% ---- SECTION ---- %%%%
%%%%%%%%%%%%%%%%%%%%%%%

\section{Results} \label{sec:results}

As the main result, we prove that there is $A\in H^\infty_2$
such that \eqref{eq:de2} admits a~non-trivial solution whose zero-sequence
does not satisfy the Blaschke condition. This answers (Q) in the negative.
Actually, we show that a non-trivial solution may vanish on any pre-given sequence of sufficiently small density.

We also obtain a complete description of zero-sequences of solutions of \eqref{eq:de2}
in the case that $A$ satisfies a condition stronger than $A\in H^\infty_2$.
In Section~\ref{subsec:normal}, we consider an application 
concerning normal meromorphic functions.

%%%%%%%%%%%%%%%%%%%%%%%%%%
%%%% ---- SUBSECTION ---- %%%%
%%%%%%%%%%%%%%%%%%%%%%%%%%

\subsection{Zero-sequences of solutions}

The sequence $\Lambda = \{z_n\}$ of points in $\D$ is said to be uniformly separated if
\begin{equation*}
  \inf_{k} \, \prod_{n\neq k} \, \left| \frac{z_n - z_k}{1-\overline{z}_n z_k} \right|>0,
\end{equation*}
while $\Lambda$ is called separated if
there exists a~constant $\delta=\delta(\Lambda)>0$ such that
$\varrho(z_n,z_k) = |z_n-z_k|/|1-\overline{z}_n z_k|>\delta$ for any $n\neq k$.
Unless otherwise stated, separation is understood 
with respect to the pseudo-hyperbolic metric.

Let $\Delta(\zeta,r) = \{ z\in\D : \varrho(z,\zeta)<r\}$ be the (open) pseudo-hyperbolic disc of radius $0<r<1$,
centered at $\zeta\in\D$, and let $n(\Lambda,\zeta,r)$ 
be the counting function for those points in $\Lambda$
which lie in $\Delta(\zeta,r)$.
The lower and upper uniform densities of $\Lambda$ are 
\begin{align}
  D^-(\Lambda) & = \liminf_{r\to 1^-} \, \left( \log\frac{1}{1-r} \right)^{-1} \, 
                 \inf_{\zeta\in\D} \int_0^r n(\Lambda,\zeta,s) \, ds,\label{eq:lowerden}\\
  D^+(\Lambda) & = \limsup_{r\to 1^-} \, \left( \log\frac{1}{1-r} \right)^{-1} \, 
                 \sup_{\zeta\in\D} \int_0^r n(\Lambda,\zeta,s) \, ds, \notag
\end{align}
respectively. For a comprehensive treatment of these densities,
and their connection to interpolation and sampling,
see \cite[Chapters~6--7]{DS:2004} and \cite[Chapter~3]{S:2004}.
See also the seminal papers \cite{S:1993, S:1993_2} by Seip.

%%%%%%%%%%%%%%%%%%%%%%%%
%%%% ---- THEOREM ---- %%%%
%%%%%%%%%%%%%%%%%%%%%%%%

\begin{theorem} \label{th:general}
If $\Lambda\subset\D$ is a separated sequence for which $D^+(\Lambda)<1$,
then there exists $A=A(\Lambda)\in H^\infty_2$ such that \eqref{eq:de2}
admits a non-trivial solution which vanishes at all points on~$\Lambda$.

Conversely, if $A\in H^\infty_2$ and $f$ is a non-trivial solution of \eqref{eq:de2}
whose zero-sequence is $\Lambda\subset\D$, then $\Lambda$ is separated, and
contains at most one point if $\nm{A}_{H^\infty_2}\leq 1$, while otherwise
\begin{equation*}
  D^+(\Lambda) \leq (2\pi+1) \! \left(1-\frac{2\nm{A}_{H^\infty_2}^{1/2}}{\nm{A}_{H^\infty_2}+1} \right)^{1/2}
  \!\!\Bigg( 1 - \left(1-\frac{2\nm{A}_{H^\infty_2}^{1/2}}{\nm{A}_{H^\infty_2}+1} \right)^{1/2} \, \Bigg)^{-2}.
\end{equation*}
\end{theorem}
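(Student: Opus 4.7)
Both directions are organised through the Schwarzian correspondence \cite[Theorem~6.1]{L:1993}. Given two linearly independent solutions $f, g$ of \eqref{eq:de2}, their Wronskian $W = fg' - f'g$ is a nonzero constant, so $f$ and $g$ share no zero; the ratio $h = g/f$ is a locally univalent meromorphic function in $\D$ with $S_h = 2A$, whose simple poles are exactly the (necessarily simple) zeros of $f$. Conversely, any locally univalent meromorphic $h$ with $\nm{S_h}_{H^\infty_2} < \infty$ arises this way, with $f = \sqrt{W/h'}$ single-valued because $\D$ is simply connected and $h' \neq 0$. The entire theorem therefore reduces to a question about pole sets of locally univalent meromorphic $h$ whose Schwarzian lies in $H^\infty_2$.

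\textbf{Existence.} For a separated $\Lambda$ with $D^+(\Lambda) < 1$, I would construct such an $h$ with simple poles exactly on $\Lambda$ via an atomic series $h = \sum_n c_n K_n$, where each $K_n$ is a meromorphic kernel localised near $z_n$ and the coefficients $c_n$ are tuned so that $h'$ remains zero-free and $\nm{S_h}_{H^\infty_2} < \infty$. The density hypothesis $D^+(\Lambda) < 1$ is the Seip threshold \cite{S:1993, S:1993_2} that makes $\Lambda$ an interpolating sequence for suitable growth spaces such as $\A^{-\alpha}$ with $\alpha$ close to $1$, which secures convergence of the series in the required class. Maintaining local univalence and $H^\infty_2$-control of the Schwarzian simultaneously is the main technical obstacle: the Schwarzian mixes two derivatives nonlinearly, so cross-terms between nearby atoms must be kept on the $H^\infty_2$-scale, and both the separation of $\Lambda$ and the strict slack in $D^+(\Lambda) < 1$ are needed to achieve this.

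\textbf{Converse.} Let $f$ solve \eqref{eq:de2} with zero set $\Lambda$ and set $h = g/f$ as above, so that $S_h = 2A$. If $\nm{A}_{H^\infty_2} \leq 1$ then $|S_h(z)|(1-|z|^2)^2 \leq 2$ throughout $\D$, and Nehari's theorem \cite[Theorem~I]{N:1949} makes $h$ univalent; hence $h$ has at most one pole and $f$ at most one zero. For general $\nm{A}_{H^\infty_2}$, the quantitative Nehari--Shimizu criterion \cite[Theorems~3--4]{S:1955} supplies an explicit pseudo-hyperbolic separation constant $\delta = \delta(\nm{A}_{H^\infty_2})$ for the pole set of $h$, and hence for $\Lambda$. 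A disc-packing argument then converts separation into a counting bound: the disjoint discs $\Delta(z_n, \delta/2)$ with $z_n \in \Lambda \cap \Delta(\zeta,r)$ all lie inside the M\"obius-enlarged disc $\Delta\bigl(\zeta, (r+\delta/2)/(1+r\delta/2)\bigr)$, and comparing hyperbolic areas yields $n(\Lambda,\zeta,r) \leq C(\delta)/(1-r)$ asymptotically. Integration and the definition of $D^+(\Lambda)$ then produce the stated upper bound; the precise dependence on $\nm{A}_{H^\infty_2}$ appearing in the displayed formula, together with the Euclidean factor $2\pi+1$, comes from optimising the area-to-count step and tracking the Shimizu constant, which is careful bookkeeping rather than hard analysis.
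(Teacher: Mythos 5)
Your converse direction is essentially the paper's: separation of the zero-sequence from \cite[Theorems~3--4]{S:1955}, the at-most-one-zero case from Nehari \cite[Theorem~I]{N:1949}, and a packing estimate converting the separation constant into the stated density bound (the paper simply cites \cite[Lemma~9, p.~190]{DS:2004} for this last step, which is exactly the disc-packing computation you sketch). That half is fine, modulo verifying the constant.

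The existence direction, however, has a genuine gap: you have reformulated the problem (find a locally univalent meromorphic $h$ with poles on $\Lambda$ and $S_h\in H^\infty_2$) but not solved it. The proposed atomic series $h=\sum_n c_n K_n$ must satisfy two global constraints simultaneously --- $h'$ zero-free in all of $\D$ and $(1-|z|^2)^2|S_h(z)|$ bounded --- and you give no mechanism for either; sums of localized meromorphic atoms generically acquire critical points, and the Schwarzian of such a sum is not controlled by the Schwarzians of the atoms, since $S_h$ mixes $h''/h'$ nonlinearly. Naming this as ``the main technical obstacle'' is accurate, but it is precisely the content of the theorem. The paper's construction sidesteps the issue by working on the solution side: by \cite[Theorem~5, p.~220]{DS:2004} one completes $\Lambda$ to a separated $\Lambda^\star\supset\Lambda$ of density $\beta=(D^+(\Lambda)+1)/2<1$ admitting $g\in\mathcal{H}(\D)$ with $(1-|z|^2)^\beta|g(z)|\simeq\varrho(z,\Lambda^\star)$ as in \eqref{eq:gasymp}; one then sets $f=ge^{h}$ and observes that $A=-f''/f$ is analytic precisely when $h'(z_n)=-\tfrac{1}{2}\,g''(z_n)/g'(z_n)$ for $z_n\in\Lambda^\star$, an interpolation problem solvable with $h'\in H^\infty_1$ by Seip's theorem \cite[Theorem~1.2]{S:1993_2} because $D^+(\Lambda^\star)<1$ and the target values are bounded thanks to \eqref{eq:gasymp}. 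The bound $A\in H^\infty_2$ then follows from \eqref{eq:gasymp} off the discs $\Delta(z_n,\delta)$ and from a Schwarz-type lemma applied to $g''+2g'h'$ on them. Note also that the theorem only asks for a solution vanishing \emph{on} $\Lambda$ (the paper's $f$ also vanishes on $\Lambda^\star\setminus\Lambda$); your plan aims at poles exactly on $\Lambda$, which is harder than required and not what is claimed.
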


Let~$\lesssim$ denote a one-sided estimate up to a constant and write~$\simeq$ for a two-sided estimate
up to constants. In the first part of Theorem~\ref{th:general} we construct a~solution $f$ of \eqref{eq:de2}
which vanishes on $\Lambda$ (but has other zeros also). 
The idea is to find an auxiliary function $g\in\mathcal{H}(\D)$ such that
\begin{equation} \label{eq:gasymp}
  (1-|z|^2)^\beta |g(z)| \simeq \varrho(z,\Lambda^\star), \quad z\in\D,
\end{equation}
for some constant $0<\beta<\infty$, and for some separated sequence $\Lambda^\star \supset \Lambda$
which satisfies $D^+(\Lambda^\star)<1$. Such function $g$ exists in the literature.
Then, $f=ge^{h}$ is a~solution of \eqref{eq:de2} for $A\in H^\infty_2$ 
if $h$ belongs to the Bloch space
and solves a certain interpolation problem.
Note that the additional assumption $D^+(\Lambda)<1$ in Theorem~\ref{th:general} holds for any
sequence $\Lambda\subset\D$ which is separated by a constant sufficiently close to one \cite[Lemma~9, p.~190]{DS:2004}.
The second part of Theorem~\ref{th:general} follows immediately
by combining \cite[Lemma~9, p.~190]{DS:2004}, \cite[Theorem~I]{N:1949} and \cite[Theorem~3]{S:1955}. It implies that
$D^+(\Lambda)\to 0^+$ at the linear rate as $\nm{A}_{H^\infty_2} \to 1^+$.

The following construction is due to Seip \cite[pp.~214--215]{S:1993}.
For $a>1$ and $b>0$, let $\Lambda=\Lambda(a,b)$ be the image of $\{a^j (bk+i)\}_{j,k\in\Z}$
under the Cayley transform (conformal map from the upper half-plane onto $\D$).
The set $\Lambda \subset \D$ satisfies $D^-(\Lambda)=D^+(\Lambda) = 2\pi/(b \log a)$ 
by \cite[p.~23]{S:1993_2}. In particular, there exists $g\in\mathcal{H}(\D)$
which satisfies \eqref{eq:gasymp} for $\Lambda^\star=\Lambda$ and $\beta=2\pi/(b \log a)$.
Now $\Lambda$ is a separated sequence which behaves (essentially) as badly as possible
in terms of the Blaschke condition; recall that the lower uniform density of any separated Blaschke sequence
is zero by \eqref{eq:lowerden}.
As a~straightforward consequence of Theorem~\ref{th:general}, we obtain:

%%%%%%%%%%%%%%%%%%%%%%%%%%
%%%% ---- COROLLARY ---- %%%%
%%%%%%%%%%%%%%%%%%%%%%%%%%

\begin{corollary} \label{cor:general}
Let $\Lambda=\Lambda(a,b) \subset\D$ be as above, and let $2\pi/(b \log a)<1$.
Then, there exists $A=A(a,b) \in H^\infty_2$ such that \eqref{eq:de2}
admits a~non-trivial solution whose zero-sequence is $\Lambda$, and hence does not satisfy the Blaschke condition.
\end{corollary}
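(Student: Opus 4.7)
The plan is to apply the first part of Theorem \ref{th:general} to $\Lambda = \Lambda(a,b)$, and then to exploit the fact that for this particular choice of $\Lambda$, the construction sketched just after Theorem \ref{th:general} can be carried out with $\Lambda^\star = \Lambda$. This promotes the conclusion from ``$f$ vanishes on $\Lambda$'' to ``the zero-sequence of $f$ equals $\Lambda$,'' which is exactly the extra content of the corollary over the theorem.

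First I would verify the hypotheses of Theorem \ref{th:general}. The set $\{a^j(bk+i)\}_{j,k\in\Z}$ is the orbit of a Fuchsian group acting on the upper half-plane, and hence its Cayley image $\Lambda$ is pseudo-hyperbolically separated in $\D$. The identity $D^-(\Lambda) = D^+(\Lambda) = 2\pi/(b \log a)$ is the one recorded in \cite[p.~23]{S:1993_2}, and together with the standing hypothesis $2\pi/(b\log a)<1$ it gives $D^+(\Lambda)<1$. Theorem \ref{th:general} then produces $A \in H^\infty_2$ and a non-trivial solution $f$ of \eqref{eq:de2} that vanishes at every point of $\Lambda$.

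To upgrade this to the statement that the full zero-sequence of $f$ is $\Lambda$, I would unpack the construction summarized in the paragraphs after Theorem \ref{th:general}. Seip's construction \cite[pp.~214--215]{S:1993} supplies $g \in \H(\D)$ satisfying \eqref{eq:gasymp} with $\Lambda^\star = \Lambda$ and $\beta = 2\pi/(b \log a)$; because the asymptotic in \eqref{eq:gasymp} is two-sided, $g$ vanishes precisely on $\Lambda$ and every zero is simple. Following the recipe outlined after Theorem \ref{th:general}, one sets $f = g e^{h}$ for a Bloch function $h$ solving the relevant interpolation problem so that $A = -f''/f$ belongs to $H^\infty_2$. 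Since $e^h$ is zero-free, the zero-sequence of $f$ coincides with that of $g$, and so equals $\Lambda$.

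Finally, the failure of the Blaschke condition is immediate from the density computation: $\Lambda$ is separated with $D^-(\Lambda) = 2\pi/(b\log a) > 0$, whereas the definition \eqref{eq:lowerden} makes it plain that every separated Blaschke sequence has $D^-=0$ (this is precisely the fact recalled in the paragraph after Theorem \ref{th:general}). The only genuine obstacle in the argument is the identification $\Lambda^\star = \Lambda$, which ensures that $g$ has no spurious zeros; but that identification is built into Seip's construction and requires no further work here. Everything else is pure bookkeeping on top of Theorem \ref{th:general}.
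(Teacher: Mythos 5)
Your proposal is correct and follows the same route as the paper: the paper also obtains the corollary by running the construction from the proof of Theorem~\ref{th:general} with $\Lambda^\star=\Lambda$, using Seip's function $g$ satisfying \eqref{eq:gasymp} with $\beta=2\pi/(b\log a)<1$ so that $f=ge^h$ vanishes precisely on $\Lambda$, and then noting that $D^-(\Lambda)>0$ rules out the Blaschke condition. (Your aside that $\{a^j(bk+i)\}$ is a Fuchsian group orbit is not quite accurate, but the separation of $\Lambda(a,b)$ is elementary and is part of the cited construction, so nothing is lost.)
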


Let $0<p<\infty$. A positive Borel measure $\mu$ on $\D$ is called a (bounded) $p$-Carleson measure
provided that
\begin{equation*}
\sup_{I\subset \partial\D} \, \frac{\mu( Q(I) ) }{|I|^p} < \infty,
\quad
Q(I) = \big\{ re^{i\theta}  :  1-|I| \leq r < 1, \, e^{i\theta}\in I \big\},
\end{equation*}
where the supremum is taken over all subarcs $I\subset \partial\D$ and $|I|$ denotes the
length of $I$ (normalized so that $|\partial\D|=1$). 
These measures can be described in conformally
invariant terms \cite[Lemma~2.1]{ASX:1996}. In fact, the positive Borel measure $\mu$ is 
a~$p$-Carleson measure if and only~if
\begin{equation} \label{eq:carleson}
  \sup_{a\in\D} \, \int_\D \left( \frac{1-\abs{a}^2}{|1-\overline{a}z|^2} \right)^p d\mu(z) < \infty.
\end{equation}
For $p=1$ the condition \eqref{eq:carleson} characterizes the classical Carleson measures,
which were invented to study interpolation by bounded analytic functions.
See \cite{G:2007} for a~general reference.

There are two types of measures which play a~role in this study. First, let 
$\delta_{z}$ be the Dirac mass at the point $z\in\D$.
We consider separated sequences $\Lambda\subset\D$ for which 
\begin{equation} \label{eq:seqcarleson}
\sum_{z_n\in\Lambda} (1-|z_n|)^p\, \delta_{z_n}
\end{equation}
is a $p$-Carleson measure. 
Such sequences are uniformly separated for any $0<p\leq1$. Second, we
consider functions $A\in\mathcal{H}(\D)$ for which
\begin{equation} \label{eq:muap}
d\mu_{A,p}(z)=|A(z)|^2(1-|z|^2)^{2+p}\, dm(z)
\end{equation}
is a~$p$-Carleson measure. Here $dm(z)$ is the Lebesgue area measure on $\D$.
We write $\mu_A = \mu_{A,1}$ for short.
Such functions satisfy $A\in H^\infty_2$ by the subharmonicity of~$|A|^2$.
The effect of the parameter~$p$ is more evident when second primitives of $A\in\mathcal{H}(\D)$ are considered
\cite[Theorem~3.2]{R:2003}:
\eqref{eq:muap} is a $p$-Carleson measure if and only if the second primitive of $A$ belongs to
$Q_p$. The space $Q_p$ consists of those $f\in\mathcal{H}(\D)$ for which $|f'(z)|^2(1-|z|^2)^p\, dm(z)$
is a $p$-Carleson measure. For $1<p<\infty$, $Q_p$ is the Bloch space while 
$Q_{p_1}\subsetneq Q_{p_2} \subsetneq Q_1=\BMOA$ for any $0<p_1<p_2<1$. 
See \cite{ASX:1996,EX:1997,NX:1997} and the references therein, for more details.

%%%%%%%%%%%%%%%%%%%%%%%%%
%%%% ---- THEOREM ----- %%%%
%%%%%%%%%%%%%%%%%%%%%%%%%

\begin{theorem} \label{th:qp}
Let $0<p\leq 1$. If $\Lambda\subset\D$ is a separated sequence such that \eqref{eq:seqcarleson} is
a~$p$-Carleson measure, then there exists $A=A(\Lambda)\in\mathcal{H}(\D)$ such that
\eqref{eq:muap} is a $p$-Carleson measure and \eqref{eq:de2} admits a non-trivial
solution whose zero-sequence is $\Lambda$.
\end{theorem}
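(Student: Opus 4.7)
Since the hypothesis that $\sum_{z_n \in \Lambda}(1-|z_n|)^p\, \delta_{z_n}$ is a $p$-Carleson measure (with $0 < p \leq 1$) is already noted in the paper to imply uniform separation, let $B$ denote the Blaschke product whose simple zeros are precisely the points of $\Lambda$. In the spirit of the strategy used for Theorem~\ref{th:general}, I would look for the solution in the form $f = B\,e^h$ with $h \in \mathcal{H}(\D)$ to be determined. A direct computation yields
\begin{equation*}
-A \;=\; \frac{f''}{f} \;=\; \frac{B''}{B} + 2\,\frac{B'}{B}\,h' + h'' + (h')^2.
\end{equation*}
At each $z_n$, the summand $B''/B$ has a simple pole, while $2(B'/B)h'$ contributes a simple pole with residue $2h'(z_n)$; demanding $A \in \mathcal{H}(\D)$ forces the interpolation condition
\begin{equation*}
h'(z_n) \;=\; -\,\frac{B''(z_n)}{2\,B'(z_n)} \;=:\; w_n, \qquad z_n \in \Lambda.
\end{equation*}
Since $B$ is an interpolating Blaschke product, $(1-|z_n|^2)|B'(z_n)| \geq \delta > 0$, and $B \in H^\infty$ gives the Schwarz--Pick bound $(1-|z_n|^2)^2|B''(z_n)| \leq C$; together these show that $\{(1-|z_n|^2)w_n\}$ is a bounded sequence of complex numbers.

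The first substantial step is to realize the interpolation $h'(z_n) = w_n$ by some $h \in Q_p$. This is a problem of free interpolation in $Q_p$ by prescribed derivative values along a uniformly separated sequence, and the $p$-Carleson hypothesis on $\sum_n(1-|z_n|)^p\, \delta_{z_n}$ is precisely the measure-theoretic side condition needed here. Concretely, I would construct $h$ as an atomic superposition whose atoms are reproducing-kernel-type functions attached to the $z_n$'s with weights tuned to $(1-|z_n|)^p$, and read off $h \in Q_p$ via a Carleson embedding against $\sum_n(1-|z_n|)^p\, \delta_{z_n}$.

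With such $h$ in hand, I would split $A = A_h + A_B$, where $A_h = -h'' - (h')^2$ and $A_B = -B''/B - 2(B'/B)h'$ (the latter being analytic in $\D$ thanks to the forced cancellation at every point of $\Lambda$). The piece $A_h$ is routine: the second-primitive characterization of $Q_p$ from \cite[Theorem~3.2]{R:2003} (cited above) gives immediately that $|h''|^2(1-|z|^2)^{p+2}\,dm$ is $p$-Carleson because $h \in Q_p$, while the inclusion $Q_p \subset \mathcal{B}$ gives $(1-|z|^2)|h'(z)| \leq C$, so
\begin{equation*}
|h'(z)|^4(1-|z|^2)^{p+2}\,dm(z) \;\leq\; C\,|h'(z)|^2(1-|z|^2)^p\,dm(z),
\end{equation*}
which is $p$-Carleson again by $h \in Q_p$. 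The piece $A_B$ demands more care: using a local factorization $B(z) = (z-z_n)B_n(z)$ around each zero together with $h'(z_n) + B_n'(z_n)/B_n(z_n) = 0$, one obtains a pointwise bound for $A_B$ by a sum of Bergman-type kernels evaluated against the bounded data $\{(1-|z_n|^2)w_n\}$; the Carleson embedding for such kernels, applied to the measure $\sum_n(1-|z_n|)^p\, \delta_{z_n}$, then yields the $p$-Carleson property of $|A_B|^2(1-|z|^2)^{p+2}\,dm$.

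The main obstacle is the Carleson estimate for $A_B$. Although $B''/B$ and $(B'/B)h'$ individually blow up at $\Lambda$, the choice of interpolation values produces a precise first-order cancellation, and extracting from this cancellation a pointwise bound of the correct size---one that \emph{matches} the Carleson hypothesis on $\Lambda$ without slack---is the crux of the argument. The free interpolation yielding $h \in Q_p$ with $h'(z_n) = w_n$ under the Carleson-measure hypothesis is the other nontrivial input; aside from these, the argument is a careful bookkeeping of decompositions and Schwarz--Pick estimates.
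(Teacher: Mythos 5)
Your overall strategy is the paper's: the ansatz $f=Be^{h}$, the forced interpolation condition $h'(z_n)=-B''(z_n)/(2B'(z_n))$, and a two-region Carleson estimate for $A$. However, there is a genuine gap at the step you yourself identify as a ``substantial step'': you assert, but do not prove, that there exists $h\in Q_p$ with the prescribed \emph{derivative} values $h'(z_n)=w_n$ where $(1-|z_n|^2)w_n$ is bounded. There is no off-the-shelf free-interpolation theorem for derivative data in $Q_p$ under the hypothesis that $\sum_n(1-|z_n|)^p\delta_{z_n}$ is a $p$-Carleson measure, and your sketched construction (an atomic superposition of kernel-type functions with weights tuned to $(1-|z_n|)^p$) is not carried out: the off-diagonal contributions at the nodes are not controlled without an additional smallness or perturbation argument, and it is not clear the resulting $h$ lies in $Q_p$, let alone in $H^\infty$. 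The paper sidesteps this entirely by setting $h=Bk$, so that $h'(z_n)=B'(z_n)k(z_n)$ and the problem becomes \emph{value} interpolation of the bounded data $k(z_n)=-B''(z_n)/(2(B'(z_n))^2)$; this is solvable with $k\in Q_p\cap H^\infty$ by Nicolau--Xiao (Theorem~1.3, whose hypothesis (b) is exactly your $p$-Carleson condition on the sequence), or by Carleson's theorem when $p=1$, and then $h=Bk\in Q_p\cap H^\infty$ because $Q_p\cap H^\infty$ is an algebra. You should adopt this reduction or supply a complete proof of the derivative-interpolation claim.

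A related omission: you never arrange for $h$ to be \emph{bounded}, only to be in $Q_p$. Boundedness of $h$ is needed in several places you rely on implicitly --- to get $|e^{\pm h}|\simeq 1$ (hence $f\in H^\infty$ and $|A|\lesssim|f''|$ off the zeros), and to keep $f$ in $Q_p$ so that the second-derivative characterization of $Q_p$ can be invoked for the global part of the estimate. Finally, your treatment of $A_B$ near the zeros (``Carleson embedding for Bergman-type kernels'') is left vague; the clean route is the Schwarz-type Lemma~\ref{lemma:schwarz} applied to $B''+2B'h'\in H^\infty_2$, which vanishes on $\Lambda$, giving $|B''(z)+2B'(z)h'(z)|\lesssim \varrho(z,z_n)(1-|z_n|^2)^{-2}$ on $\Delta(z_n,\delta)$; combined with $|B(z)|\gtrsim\varrho(z,z_n)$ there, each disc contributes $\simeq(1-|z_n|^2)^p(1-|a|^2)^p|1-\overline{a}z_n|^{-2p}$, and the sum is finite precisely because \eqref{eq:seqcarleson} is a $p$-Carleson measure.
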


Theorems~\ref{th:general} and \ref{th:qp} improve \cite[Corollary~7]{GH:2012}, which states that
any uniformly separated sequence can appear as the zero-sequence of a non-trivial solution
of \eqref{eq:de2} where $A\in H^\infty_2$, i.e., the second primitive of $A$
is in the Bloch space. Theorem~\ref{th:general} shows that we can prescribe zero-sequences of 
strictly positive uniform density 
under the same coefficient condition while Theorem~\ref{th:qp} implies that 
any sufficiently separated sequence can be prescribed
such that the second primitive of $A$ belongs to $Q_p$ for fixed $0<p\leq 1$.
When prescribing infinite zero-sequences to non-trivial solutions of \eqref{eq:de2}, we cannot
expect that the coefficient $A$ is even close to be bounded.
The breaking point lies inside~$H^\infty_2$: if $A\in\mathcal{H}(\D)$ and there exists $0<R<1$ such that
$(1-|z|^2)^2|A(z)| \leq 1$ for $R<|z|<1$, then all non-trivial solutions of \eqref{eq:de2} have at most
finitely many zeros in $\D$ \cite[Theorem~1]{S:1955}.

By Theorem~\ref{th:qp}, we obtain a~complete description of 
zero-sequences of non-trivial solutions of \eqref{eq:de2} in the case that 
$d\mu_A(z)=|A(z)|^2(1-|z|^2)^3\, dm(z)$ is a Carleson measure.

%%%%%%%%%%%%%%%%%%%%%%%%%%%
%%%% ---- COROLLARY ----- %%%%
%%%%%%%%%%%%%%%%%%%%%%%%%%%

\begin{corollary} \label{cor:desc}
A sequence $\Lambda\subset\D$ is the zero-sequence of a non-trivial solution of~\eqref{eq:de2}
where $\mu_A$ is a Carleson measure if and only if $\Lambda$ is uniformly separated.
\end{corollary}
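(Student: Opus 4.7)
The plan is to derive the corollary from Theorem~\ref{th:qp} with $p=1$, combined with the classical Carleson--Shapiro--Shields characterization of uniform separation (see, e.g., \cite{G:2007}): a sequence $\Lambda\subset\D$ is uniformly separated if and only if it is separated \emph{and} the measure $\sum_{z_n\in\Lambda}(1-|z_n|)\,\delta_{z_n}$ is a bounded Carleson measure.

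Granting this characterization, the sufficiency direction is immediate. Assume $\Lambda$ is uniformly separated. Then the characterization verifies the hypothesis of Theorem~\ref{th:qp} at $p=1$, producing $A\in\H(\D)$ with $\mu_A$ Carleson and a non-trivial solution of \eqref{eq:de2} whose zero-sequence is $\Lambda$.

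For the necessity, suppose $f''+Af=0$ admits a non-trivial solution $f$ with zero-sequence $\Lambda$, and $\mu_A$ is Carleson. As noted in the paragraph containing \eqref{eq:muap}, the subharmonicity of $|A|^2$ together with the Carleson condition forces $A\in H^\infty_2$; Theorem~\ref{th:general} then gives separation of $\Lambda$. By the classical characterization above, it remains to show that $\sum(1-|z_n|)\,\delta_{z_n}$ is a Carleson measure.

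The principal obstacle is this last step: converting an integral Carleson condition on $|A|^2(1-|z|^2)^3\,dm$ into a discrete Carleson condition on the zero-sequence of a solution. I would attempt it via Carleson testing against the invariant kernels $(1-|a|^2)|1-\overline{a}z|^{-2}$ combined with a Jensen-type estimate. For each test point $a\in\D$, pull back to the origin by a M\"obius transformation and use Jensen's formula to express $\sum_n\log^+(\varrho(z_n,a)^{-1})$ as a boundary mean of $\log|f|$ minus $\log|f(a)|$; the equation $f''=-Af$ together with $A\in H^\infty_2$ then should allow one to bound the boundary mean by a constant plus a test integral of $\mu_A$. Summing the resulting pointwise estimates and invoking the Carleson property of $\mu_A$ would complete the proof. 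The chief technical difficulty is that $\log|f|$ involves the unknown local sizes $|f'(z_n)|$ at the zeros of $f$, and one must normalize the argument (e.g.\ by factoring out a Blaschke product built from a fixed finite subsequence, or by working with the quotient of two linearly independent solutions whose Schwarzian derivative is $2A$) so that the resulting bound depends only on the Carleson constant of $\mu_A$ and not on individual values of $f$ or~$f'$.
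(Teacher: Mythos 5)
Your sufficiency direction is exactly the paper's: uniform separation is equivalent to separation together with $\sum_{z_n\in\Lambda}(1-|z_n|)\,\delta_{z_n}$ being a Carleson measure, so Theorem~\ref{th:qp} with $p=1$ applies verbatim. That half is fine.

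The necessity direction is where there is a genuine gap. The paper does not prove this implication internally; it invokes \cite[Corollary~3]{GN:preprint}, which states that if $\mu_A$ is a Carleson measure then the zero-sequence of any non-trivial solution is uniformly separated --- a substantial result obtained there by a hyperbolic-geometry argument, not by Jensen's formula. Your sketch reduces the problem (correctly, via the Carleson--Shapiro--Shields equivalence and the separation supplied by Theorem~\ref{th:general}) to showing that $\sum_{z_n\in\Lambda}(1-|z_n|)\,\delta_{z_n}$ is Carleson, but the Jensen-formula route you propose for that step does not go through as described. To make it work you would need a bound, uniform in $a\in\D$, on the boundary means $\tfrac{1}{2\pi}\int_0^{2\pi}\log^+\bigl|f(\phi_a(re^{i\theta}))\bigr|\,d\theta$ for a suitably normalized solution; this amounts to a uniform Nevanlinna/BMOA-type estimate for the M\"obius shifts of $f$, which is not available from $A\in H^\infty_2$ alone (solutions can lie outside the Nevanlinna class, as noted in the introduction) and is known to follow from the Carleson condition on $\mu_A$ only under smallness assumptions on the Carleson constant. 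The normalization problem you flag --- the term $-\log|f(a)|$, i.e.\ the unknown sizes $|f'(z_n)|$ when $a$ is near a zero --- is not a technicality to be smoothed over but precisely the obstruction, and neither of your two suggested fixes (dividing out a finite Blaschke product, or passing to the quotient of two solutions) is developed far enough to show that it removes the dependence on the particular solution. As written, the necessity half is a plan with an acknowledged unresolved step, not a proof; the direct fix is to cite \cite[Corollary~3]{GN:preprint} as the paper does.
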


In Corollary~\ref{cor:desc}, all zero-sequences are uniformly separated by
\cite[Corollary~3]{GN:preprint}. The converse assertion follows from Theorem~\ref{th:qp} by taking $p=1$.
The following observation concerns the case $A\in H^\infty_{2,0}$, which means that
$A\in\mathcal{H}(\D)$ and $\lim_{|z|\to 1^{-}} \, (1-|z|^2)^2 |A(z)| = 0$.
Sequence $\Lambda\subset\D$ is the zero-sequence of a non-trivial solution of~\eqref{eq:de2}
where $A\in H^\infty_{2,0}$ if and only if $\Lambda$ is a~finite sequence of distinct points in $\D$.
The fact that all zero-sequences are finite follows
from \cite[Theorem~1]{S:1955}, while the converse assertion is proved by constructing
a non-trivial solution of \eqref{eq:de2}, which has finitely many prescribed zeros \cite[Section~10]{H:2013}.

%%%%%%%%%%%%%%%%%%%%%%%%%%
%%%% ---- SUBSECTION ---- %%%%
%%%%%%%%%%%%%%%%%%%%%%%%%%

\subsubsection*{Conformally invariant collections of zero-sequences}
Let $X$ be a space of analytic functions, and 
let $\mathcal{Z}(X)$ be the collection of sequences $\Lambda\subset \D$ 
for which there exists $A=A(\Lambda)\in X$ such that \eqref{eq:de2} 
admits a~non-trivial solution whose zero-sequence is (precisely)~$\Lambda$.
Some parts of the following result are known in another form, see the proof for references.

%%%%%%%%%%%%%%%%%%%%%%%%
%%%% ---- THEOREM ---- %%%%
%%%%%%%%%%%%%%%%%%%%%%%%

\begin{proposition} \label{prop:1c}
The following statements hold:
\begin{enumerate}
\item[\rm (a)]
$\mathcal{Z}(H^\infty_2)$ is conformally invariant.

\item[\rm (b)]
If $\Lambda$ is separated and $D^+(\Lambda)<1$, then $\Lambda \subset\Lambda'$
for some $\Lambda' \in \mathcal{Z}(H^\infty_2)$. Conversely,
if $\Lambda\in \mathcal{Z}(H^\infty_2)$, then $\Lambda$ is separated and $D^+(\Lambda)<\infty$.

\item[\rm (c)]
$\mathcal{Z}(H^\infty_2)$ contains non-Blaschke sequences. However,
if $\Lambda\in \mathcal{Z}(H^\infty_2)$ and
\begin{equation} \label{eq:dist}
  \int_0^{2\pi} \log\dist(e^{i\theta},\Lambda)\,d\theta > -\infty,
\end{equation}
where $\dist$ is the Euclidean distance, then $\Lambda$ is a Blaschke sequence.
\end{enumerate}
\end{proposition}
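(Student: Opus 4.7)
For part (a), the plan is to use the classical transformation of second-order linear ODEs under Möbius automorphisms of $\D$. Given an automorphism $\phi$ of $\D$ and a solution $f$ of $f''+Af=0$, set $\tilde f(z)=(\phi'(z))^{-1/2}f(\phi(z))$, using the single-valued branch of the root afforded by simple-connectedness and the non-vanishing of $\phi'$. A direct computation shows that $\tilde f$ satisfies $\tilde f''+\tilde A\tilde f=0$ with $\tilde A(z)=A(\phi(z))(\phi'(z))^2+\tfrac12 S_\phi(z)$, and the Schwarzian derivative of a Möbius map vanishes, so $\tilde A=(A\circ\phi)(\phi')^2$. The identity $1-|\phi(z)|^2=(1-|z|^2)|\phi'(z)|$ immediately gives $\nm{\tilde A}_{H^\infty_2}=\nm{A}_{H^\infty_2}$, while the zero-sequence of $\tilde f$ is $\phi^{-1}(\Lambda)$; conformal invariance follows. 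Part (b) is then essentially a restatement of Theorem~\ref{th:general}: the forward implication produces, for separated $\Lambda$ with $D^+(\Lambda)<1$, a non-trivial solution of \eqref{eq:de2} with $A\in H^\infty_2$ vanishing on $\Lambda$, and one lets $\Lambda'\supset\Lambda$ be the complete zero-sequence of this solution, while the converse (separation together with $D^+(\Lambda)<\infty$) is contained in the second half of Theorem~\ref{th:general} because the displayed upper bound depends only on $\nm{A}_{H^\infty_2}$.

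The first assertion of part (c) is immediate from Corollary~\ref{cor:general}. For the second, I would argue by contrapositive and reduce the statement to a purely geometric property: if $\Lambda=\{z_n\}\subset\D$ is separated and $\sum_n(1-|z_n|)=\infty$, then $\int_0^{2\pi}\log\dist(e^{i\theta},\Lambda)\,d\theta=-\infty$. To each $z_n$ associate its radial projection $\zeta_n=z_n/|z_n|$ and the boundary arc $I_n$ centered at $\zeta_n$ with $|I_n|=c(1-|z_n|)$, where $c>0$ is chosen small enough in terms of the separation constant of $\Lambda$ that the arcs $I_n$ are pairwise disjoint (see the obstacle below). The triangle inequality gives $\dist(e^{i\theta},\Lambda)\leq|e^{i\theta}-z_n|\leq 2(1-|z_n|)$ for $e^{i\theta}\in I_n$, and since $\log\dist\leq\log 2$ everywhere,
\begin{equation*}
 \int_0^{2\pi}\log\dist(e^{i\theta},\Lambda)\,d\theta
 \;\leq\; \sum_n\int_{I_n}\log\dist(e^{i\theta},\Lambda)\,d\theta+2\pi\log 2
 \;\leq\; c\sum_n(1-|z_n|)\log\bigl(2(1-|z_n|)\bigr)+2\pi\log 2.
\end{equation*}
Separation forces $1-|z_n|\to 0$, so for all but finitely many $n$ each surviving term on the right is negative with magnitude at least $c(\log 2)(1-|z_n|)$; the divergence of $\sum(1-|z_n|)$ therefore forces the right-hand side to equal $-\infty$.

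The main obstacle is the disjointness of the arcs $I_n$: two points of $\Lambda$ lying at very different radii but with close arguments can be well-separated in the pseudo-hyperbolic metric yet project to essentially the same boundary point, so a naive choice of length $\asymp 1-|z_n|$ does not yield disjoint arcs in general. To circumvent this I would use a Whitney-type dyadic decomposition of $\D$ in which each cell carries only finitely many points of $\Lambda$ (with a uniform bound inherited from the pseudo-hyperbolic separation); the arcs $I_n$ can then be chosen as suitable sub-arcs of the boundary edges of these Whitney cells, making disjointness automatic at the cost of shrinking $c$. Once this combinatorial step is settled, the displayed estimate closes the argument.
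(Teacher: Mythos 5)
Your treatment of (a), of (b), and of the first assertion of (c) is essentially the paper's own: (a) is the change of variables $g=(f\circ\phi)(\phi')^{-1/2}$ with $B=(A\circ\phi)(\phi')^2$ and the identity $1-|\phi(z)|^2=(1-|z|^2)|\phi'(z)|$ (the paper cites Pommerenke for the transformation rather than recomputing the Schwarzian term), while (b) and the existence of non-Blaschke sequences are read off from Theorem~\ref{th:general} and Corollary~\ref{cor:general} exactly as you do.

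For the second assertion of (c) you depart from the paper, which argues through function theory: by Pommerenke's growth estimate the solution $f$ lies in $H^\infty_\alpha$ for some finite $\alpha=\alpha(\nm{A}_{H^\infty_2})$, and Korenblum's theorem on zero sets of $H^\infty_\alpha$ converts \eqref{eq:dist} into the Blaschke condition. Your plan is to prove the purely geometric statement that a separated non-Blaschke sequence violates \eqref{eq:dist}; this statement is true and, if proved, would give a formally stronger conclusion (only the separation from part (b) is used, not the fact that $\Lambda$ is a zero-sequence). However, your argument for it has a genuine gap which the Whitney fix does not close. The obstruction to disjointness is not the local crowding you describe: pairwise disjoint arcs $I_n\subset\partial\D$ with $|I_n|=c(1-|z_n|)$ \emph{cannot exist at all} for a non-Blaschke sequence, since disjointness would force $c\sum_n(1-|z_n|)\leq 2\pi$. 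Passing to sub-arcs of boundary edges of Whitney cells does not help, because the radial projections of cells from different generations are nested, and any selection retaining $|I_n|\gtrsim 1-|z_n|$ meets the same total-length contradiction. Nor can the overlap simply be tolerated: since $\log\dist(e^{i\theta},\Lambda)-\log 2\leq 0$, the sum $\sum_n\int_{I_n}$ over overlapping arcs bounds $\int_{\bigcup_n I_n}$ only from \emph{below}, which is the wrong direction for your estimate.

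The correct repair is a layer-cake count rather than a disjoint-arc count. Let $Q_{k,j}$ be the Whitney boxes $\{re^{i\theta}: 2^{-k-1}<1-r\leq 2^{-k},\ \theta\in 2\pi[j2^{-k},(j+1)2^{-k})\}$ and $I_{k,j}$ their radial projections onto $\partial\D$. Separation gives a uniform bound $N$ on the number of points of $\Lambda$ per box, so $\sum_n(1-|z_n|)\lesssim N\sum_{(k,j)\ \mathrm{occupied}}2^{-k}$, and hence $\sum_k|F_k|=\infty$, where $F_k$ is the union of the arcs $I_{k,j}$ over occupied boxes at level $k$. If $z_n\in Q_{k,j}$ and $e^{i\theta}\in I_{k,j}$ then $|e^{i\theta}-z_n|\leq(2\pi+1)2^{-k}$, so $-\log\dist(e^{i\theta},\Lambda)\geq k(\theta)\log 2-\log(2\pi+1)$, where $k(\theta)$ is the largest occupied level above $e^{i\theta}$. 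Since $\{\theta: k(\theta)\geq m\}=\bigcup_{k\geq m}F_k\supseteq F_m$, one gets $\int_0^{2\pi}k(\theta)\,d\theta=\sum_{m}\big|\bigcup_{k\geq m}F_k\big|\geq\sum_m|F_m|=\infty$ (and if $k(\theta)=\infty$ on a set of positive measure the conclusion is trivial). This closes your argument, but it is an argument you still need to supply; as written, the proof of the second half of (c) is incomplete, and the paper's route via Korenblum is considerably shorter.
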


Let $K$ be the space which consists of the second derivatives of $\BMOA$ functions. 
Consequently, $\mathcal{Z}(K)$ is the collection of zero-sequences of non-trivial solutions of \eqref{eq:de2} 
induced by those coefficients $A\in\mathcal{H}(\D)$ for which $d\mu_A(z)= |A(z)|^2(1-|z|^2)^3\, dm(z)$ 
is a~Carleson measure. By Corollary~\ref{cor:desc}, 
$\Lambda\in \mathcal{Z}(K)$ if and only if $\Lambda$ is uniformly separated.
The following observations follow from 
known properties of uniformly separated sequences:
\begin{enumerate}
\item[\rm (a)]
  $\mathcal{Z}(K)$ is conformally invariant.

\item[\rm (b)]
  If $\Lambda \subset \Lambda'$ for some $\Lambda'\in\mathcal{Z}(K)$,
  then $\Lambda\in\mathcal{Z}(K)$.

\item[\rm (c)]
  If $\Lambda_1,\Lambda_2\in\mathcal{Z}(K)$, and
  $\Lambda_1 \cup \Lambda_2$ is separated, then $\Lambda_1 \cup \Lambda_2 \in \mathcal{Z}(K)$.

\item[\rm (d)]
  If $\Lambda=\{z_n\} \in \mathcal{Z}(K)$, and $\Lambda'=\{ z_n' \}\subset \D$ is a sequence
  such that $\sup_n  \varrho(z_n,z_n')$ is sufficiently small, then
  $\Lambda' \in \mathcal{Z}(K)$.

\item[\rm (e)]
  $\mathcal{Z}(K)$ contains only Blaschke sequences. However, there
  are separated Blaschke sequences which are not in $\mathcal{Z}(K)$.
\end{enumerate}

By subharmonicity $K\subset H^\infty_2$, and hence $\mathcal{Z}(K)\subset \mathcal{Z}(H^\infty_2)$.
It is curious that $\mathcal{Z}(H^\infty_{2,0})\subset \mathcal{Z}(K)$ even though
there exist functions $A\in H^\infty_{2,0}$ for which $\mu_A$ is not a Carleson measure;
typical examples of such functions are given in terms of lacunary series.
Note that $\mathcal{Z}(H^\infty_{2,0})$ is the collection of  
finite sequences of distinct points in $\D$.

%%%%%%%%%%%%%%%%%%%%%%%%%%
%%%% ---- SUBSECTION ---- %%%%
%%%%%%%%%%%%%%%%%%%%%%%%%%

\subsection{Normal functions} \label{subsec:normal}

A function $w$ meromorphic in the unit disc $\D$ is said to be normal if
%\begin{equation*}
  $\sup_{z\in\D} \, (1-|z|^2) \, w^{\#}(z) < \infty$,
%\end{equation*}
where $w^{\#} = |w'|/(1+|w|^2)$ is the spherical derivative of $w$. 
Actually, a~meromorphic function $w$ is normal if and only if
$\{ w \circ \phi : \text{$\phi$ conformal automorphism of $\D$}\}$ is a normal family in $\D$ (in
the sense of Montel). For more details, see \cite{LV:1957}.

In \cite{L:1973}, Lappan gives an answer to a question of Hayman by showing 
that there exists a non-normal $w\in \mathcal{H}(\D)$ whose Schwarzian derivative
\begin{equation*}
  S_w = \left( \frac{w''}{w'} \right)' - \frac{1}{2} \left( \frac{w''}{w'} \right)^2
\end{equation*}
belongs to $H^\infty_2$. In a subsequent paper \cite[Theorem~5]{L:1978}
a concrete function having these properties is presented.
Recall that, for $w$ meromorphic in $\D$, $S_w\in H^\infty_2$ if and only if $w$ is uniformly locally univalent;
see for example \cite[Lemma~B]{G:2016} and the references therein. 
As a consequence of the proof of Theorem~\ref{th:qp}, we obtain:

%%%%%%%%%%%%%%%%%%%%%%%%%%
%%%% ---- COROLLARY ---- %%%%
%%%%%%%%%%%%%%%%%%%%%%%%%%

\begin{corollary} \label{cor:normal}
For any $0<p\leq 1$, 
there exists a non-normal meromorphic function $w$ in $\D$ such that
$S_w\in\mathcal{H}(\D)$ and $|S_w(z)|^2(1-|z|^2)^{2+p}\, dm(z)$ is a~$p$-Carleson measure.
\end{corollary}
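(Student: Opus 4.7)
The plan is to realize $w$ as the ratio of two linearly independent solutions of $f''+Af=0$, with $A$ furnished by Theorem~\ref{th:qp}. First I would fix an infinite uniformly separated sequence $\Lambda\subset\D$ for which \eqref{eq:seqcarleson} is a $p$-Carleson measure---e.g.\ a radial lacunary sequence like $\{1-2^{-n}\}_{n\in\N}$, which is easy to check directly. Theorem~\ref{th:qp} then yields $A\in\H(\D)$ such that $|A(z)|^2(1-|z|^2)^{2+p}\,dm(z)$ is a $p$-Carleson measure together with a non-trivial solution $f_1$ of \eqref{eq:de2} whose zero-sequence is exactly $\Lambda$. Taking any solution $f_2$ linearly independent from $f_1$ and setting $w=f_1/f_2$, the classical identity $S_w=2A$ gives $|S_w(z)|^2(1-|z|^2)^{2+p}\,dm(z)=4\,d\mu_{A,p}(z)$, so the $p$-Carleson requirement on $w$ is automatic.

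The real work is showing that $w$ is non-normal. If $W$ denotes the constant nonzero Wronskian of $f_1$ and $f_2$, then $w^{\#}=|W|/(|f_1|^2+|f_2|^2)$. At a point $z_n\in\Lambda$ one has $f_1(z_n)=0$ and hence $f_2(z_n)=-W/f_1'(z_n)$, so
\[
(1-|z_n|^2)\,w^{\#}(z_n) \;=\; \frac{(1-|z_n|^2)\,|f_1'(z_n)|^2}{|W|}.
\]
Hence non-normality of $w$ will follow as soon as $|f_1'(z_n)|^2(1-|z_n|^2)\to\infty$ along some subsequence of $\Lambda$. I would read this growth off from the construction: the solution in the proof of Theorem~\ref{th:qp} (parallel to the sketch following Theorem~\ref{th:general}) has the shape $f_1=ge^{h}$, where $g\in\H(\D)$ satisfies an asymptotic of the form \eqref{eq:gasymp} and $h$ is obtained from an interpolation problem with a $Q_p$-type norm bound. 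At a zero $z_n\in\Lambda$ this yields $|f_1'(z_n)|\simeq e^{\Real h(z_n)}(1-|z_n|^2)^{-(\beta+1)}$, so the whole matter reduces to picking $h$ with $\Real h$ unbounded above along a subsequence of~$\Lambda$.

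The main obstacle is precisely this last step: one must check that the interpolation problem characterizing $h$ still admits a solution with $\Real h(z_n)\to+\infty$ on some subsequence while respecting the $Q_p$-type norm bound that keeps $d\mu_{A,p}$ a $p$-Carleson measure. Standard interpolation theory on uniformly separated sequences leaves plenty of room---prescribed values of size $O(\log\tfrac{1}{1-|z_n|^2})$ are allowed---so a suitable $h$ can be produced, and then the chain $\Real h(z_n)\to+\infty\Rightarrow|f_1'(z_n)|^2(1-|z_n|^2)\to\infty\Rightarrow w^{\#}(z_n)(1-|z_n|^2)\to\infty$ gives non-normality of $w$ and completes the proof of the corollary.
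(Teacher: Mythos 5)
Your setup coincides with the paper's: take $f_1$ from Theorem~\ref{th:qp}, a linearly independent solution $f_2$, put $w=f_1/f_2$, use $S_w=2A$ for the Carleson half, and reduce non-normality to the growth of $(1-|z_n|^2)\,|f_1'(z_n)|^2$ via the Wronskian identity $w^{\#}(z_n)=|f_1'(z_n)|^2/|W|$. All of that is correct. The problem is the final step. In the construction of Theorem~\ref{th:qp} the solution is $f_1=Be^{h}$, where $B$ is the Blaschke product of the uniformly separated sequence $\Lambda$ and $h=Bk\in Q_p\cap H^\infty$; in particular $h$ is \emph{bounded} (this is used essentially in the proof of Theorem~\ref{th:qp}, in the estimate $|A|\lesssim |f''|\,|e^{-h}|$ on $\Omega$), and in fact $h(z_n)=B(z_n)k(z_n)=0$. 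So an interpolation scheme producing $\operatorname{Re} h(z_n)\to+\infty$ with prescribed values of size $\log\frac{1}{1-|z_n|^2}$ is not available here: that is the Bloch-space interpolation relevant to Theorem~\ref{th:general} (where only $h'\in H^\infty_1$ is required), and importing it would destroy the $H^\infty$ bound on $h$ on which the $p$-Carleson estimate for $\mu_{A,p}$ rests. As written, your ``main obstacle'' step therefore fails.

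Fortunately the detour is unnecessary. Uniform separation of $\Lambda$ gives $\delta<(1-|z_n|^2)|B'(z_n)|\leq 1$, and $|e^{h(z_n)}|\simeq 1$ (indeed $h(z_n)=0$), so $|f_1'(z_n)|=|B'(z_n)|\,|e^{h(z_n)}|\simeq (1-|z_n|^2)^{-1}$. Hence $(1-|z_n|^2)\,w^{\#}(z_n)\simeq (1-|z_n|^2)^{-1}\to\infty$ along all of $\Lambda$, with no extra condition on $h$; this is exactly how the paper concludes. In your notation the relevant exponent is $\beta=0$, so $(1-|z_n|^2)|f_1'(z_n)|^2\simeq(1-|z_n|^2)^{-1}$ already diverges. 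With this correction your argument becomes the paper's argument.
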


In Corollary~\ref{cor:normal}, $S_w\in H^\infty_2$ by the subharmonicity of $|S_w|^2$. 
By construction, the function $w$ has prescribed separated poles $\Lambda$
such that \eqref{eq:seqcarleson} is a $p$-Carleson measure, $w$ belongs to the Nevanlinna class 
of meromorphic functions and emerges  as a~primitive of $1/f^2$ where $f\in Q_p \cap H^\infty$ 
is a solution of \eqref{eq:de2} for $A=S_w/2$.

%%%%%%%%%%%%%%%%%%%%%%%
%%%% ---- SECTION ---- %%%%
%%%%%%%%%%%%%%%%%%%%%%%

\section{Proofs of the results}

The growth space $H^\infty_\alpha$, for $0\leq \alpha < \infty$, consists of those
$g\in\mathcal{H}(\D)$ for~which
\begin{equation*}
  \nm{g}_{H^\infty_\alpha} = \sup_{z\in\D} \, (1-|z|^2)^\alpha |g(z)|<\infty.
\end{equation*}
In particular, $H^\infty = H^\infty_0$. Before the proof of Theorem~\ref{th:general}, we consider
an~auxiliary result which resembles the classical Schwarz lemma. See also \cite[Lemma~7, p.~209]{DS:2004}.
The proof of Lemma~\ref{lemma:schwarz} is presented for convenience of the reader.

%%%%%%%%%%%%%%%%%%%%%%
%%%% ---- LEMMA ---- %%%%
%%%%%%%%%%%%%%%%%%%%%%

\begin{lemma} \label{lemma:schwarz}
Let $g\in H^\infty_\alpha$ where $0\leq \alpha<\infty$, and let $0<\delta<1$. If $g(z_0)=0$
for some $z_0\in\D$, then there exists a positive constant $C=C(\alpha, \delta)$ such that
\begin{equation} \label{eq:ggest}
  |g(z)| \leq \frac{C \, \nm{g}_{H^\infty_\alpha}\, \varrho(z,z_0)}{(1-|z_0|^2)^\alpha}, \quad z\in \Delta(z_0,\delta).
\end{equation}
\end{lemma}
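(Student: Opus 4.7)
The plan is to reduce to the classical Schwarz lemma via a Möbius change of variable centered at $z_0$, together with the standard identity for $1-|\phi_{z_0}^{-1}(w)|^2$ that converts the weighted bound $g\in H^\infty_\alpha$ into an honest sup-norm bound on a slightly larger pseudo-hyperbolic disc.

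Concretely, I would set $\phi_{z_0}(z)=(z-z_0)/(1-\overline{z}_0 z)$ and define $h=g\circ \phi_{z_0}^{-1}\in \mathcal{H}(\D)$. Since $g(z_0)=0$, we have $h(0)=0$. Fix an auxiliary radius $\rho$ with $\delta<\rho<1$ (for instance $\rho=(1+\delta)/2$). For $|w|\leq\rho$, the identity
\begin{equation*}
1-|\phi_{z_0}^{-1}(w)|^2 = \frac{(1-|z_0|^2)(1-|w|^2)}{|1+\overline{z}_0 w|^2}
\end{equation*}
combined with $|1+\overline{z}_0 w|\leq 2$ and $1-|w|^2\geq 1-\rho^2$ yields
\begin{equation*}
|h(w)| \leq \frac{\nm{g}_{H^\infty_\alpha}}{(1-|\phi_{z_0}^{-1}(w)|^2)^\alpha} \leq \frac{C_1(\alpha,\delta)\,\nm{g}_{H^\infty_\alpha}}{(1-|z_0|^2)^\alpha}.
\end{equation*}

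Then I would apply the Schwarz lemma to $h$ restricted to $\{|w|<\rho\}$: since $h(0)=0$ and $h$ is bounded on this disc by the quantity $M$ above, $|h(w)|\leq M|w|/\rho$ for $|w|<\rho$. Restricting to $|w|\leq\delta$ and using $\varrho(z,z_0)=|\phi_{z_0}(z)|$, the estimate \eqref{eq:ggest} follows with $C=C_1(\alpha,\delta)/\rho$ depending only on $\alpha$ and $\delta$.

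There is no real obstacle here; the only thing that needs care is making sure the intermediate radius $\rho$ is strictly between $\delta$ and $1$ so that the sup-norm bound of $h$ on $\{|w|<\rho\}$ is finite and the Schwarz-lemma factor $1/\rho$ can be absorbed into the $\delta$-dependence of the constant. Everything else is a routine bookkeeping of Möbius-invariant identities.
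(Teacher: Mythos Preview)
Your proof is correct and is essentially the same argument as the paper's. The only cosmetic difference is that the paper divides $g$ by the Blaschke factor $(z_0-z)/(1-\overline{z}_0 z)$ and applies the maximum modulus principle on $\overline{\Delta(z_0,\delta)}$, whereas you pre-compose with $\phi_{z_0}^{-1}$ and invoke the Schwarz lemma on a centered disc; the auxiliary radius $\rho$ you introduce is harmless but unnecessary, since taking $\rho=\delta$ already yields the desired bound (as the paper does).
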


%%%%%%%%%%%%%%%%%%%%%%
%%%% ---- PROOF ---- %%%%
%%%%%%%%%%%%%%%%%%%%%%

\begin{proof}
Consider the function $h(z) = g(z)/((z_0-z)/(1-\overline{z}_0 z))$, and note that $h\in\mathcal{H}(\D)$.
There exists a positive constant $C^\star=C^\star(\alpha,\delta)$ such that
\begin{equation*}
  |h(z)| \leq \frac{|g(z)|}{\varrho(z,z_0)} \leq \frac{\nm{g}_{H^\infty_\alpha}}{\delta\, (1-|z|^2)^\alpha}
  \leq \frac{C^\star \, \nm{g}_{H^\infty_\alpha}}{\delta\, (1-|z_0|^2)^\alpha}, 
  \quad z\in\partial \Delta(z_0,\delta).
\end{equation*}
By the maximum modulus principle this inequality holds for all $z\in\Delta(z_0,\delta)$,
which implies \eqref{eq:ggest} for $C=C^\star/\delta$.
\end{proof}

%%%%%%%%%%%%%%%%%%%%%%
%%%% ---- PROOF ---- %%%%
%%%%%%%%%%%%%%%%%%%%%%

\begin{proof}[Proof of Theorem~\ref{th:general}]
Let $\Lambda$ be a separated sequence for which $D^+(\Lambda)<1$.
By \cite[Theorem~5, p.~220]{DS:2004}, there exist a separated sequence $\Lambda^\star \supset \Lambda$
and $g\in\mathcal{H}(\D)$ such that \eqref{eq:gasymp} holds for $\beta=(D^+(\Lambda)+1)/2<1$.
According to \cite[Lemma~19, p.~235]{DS:2004}, we have $D^-(\Lambda^\star) = D^+(\Lambda^\star) = \beta$.
Let $f=g e^h$ where $h\in\mathcal{H}(\D)$ is defined later. 
The function $f$ is a~solution of \eqref{eq:de2} with $A\in\mathcal{H}(\D)$,
\begin{equation*} 
  A = -\frac{f''}{f} = - \frac{g''+2g'h'}{g} - (h')^2 -h'',
\end{equation*}
provided that $h\in\mathcal{H}(\D)$ and $h'$ satisfies the interpolation property 
\begin{equation} \label{eq:interpolation}
  h'(z_n)= - \frac{1}{2} \, \frac{g''(z_n)}{g'(z_n)}, \quad z_n\in\Lambda^\star.
\end{equation}
In particular, because of \eqref{eq:interpolation}, $A$ has a removable singularity at each zero of~$g$.
Since $g''\in H^\infty_{\beta+2}$ by Cauchy's integral formula, and
\begin{equation} \label{eq:ope} 
  \inf_{z_n\in\Lambda^\star} \, (1-|z_n|^2)^{\beta+1} |g'(z_n)| >0
\end{equation}
by \eqref{eq:gasymp}, we deduce
\begin{equation} \label{eq:blochest}
  \sup_{z_n\in\Lambda^\star} \, (1-|z_n|^2)  \left| \frac{g''(z_n)}{g'(z_n)} \right| < \infty.
\end{equation}
Since $D^+(\Lambda^\star)=\beta<1$, \cite[Theorem~1.2]{S:1993_2} and \eqref{eq:blochest}
imply that there exists $h\in\mathcal{H}(\D)$ such that $h'\in H^\infty_1$ and the interpolation
property \eqref{eq:interpolation} holds.

It remains to prove that $A\in H^\infty_2$.
Let $0<\delta<1$ be a sufficiently small constant such that the pseudo-hyperbolic discs
$\Delta(z_n,\delta)$ are pairwise disjoint for $z_n\in\Lambda^\star$. On one hand,
by \eqref{eq:gasymp} and $h'\in H^\infty_1$, we obtain
\begin{equation*}
  \sup \bigg\{ (1-|z|^2)^2 |A(z)| \, : \,  z\in \D \setminus \bigcup_{z_n\in\Lambda^\star} \Delta(z_n,\delta) \bigg\} < \infty.
\end{equation*}
On the other hand, if $z\in\Delta(z_n,\delta)$ for some $z_n\in\Lambda^\star$, then
we apply Lemma~\ref{lemma:schwarz} to $g''+2g'h' \in H^\infty_{\beta+2}$ (which vanishes at all points $z_n\in\Lambda^\star$
by the interpolation property)
to deduce that $(1-|z|^2)^2|A(z)|$ is uniformly bounded also for any $z\in \bigcup_{z_n\in\Lambda^\star} \Delta(z_n,\delta)$.
This completes the proof of Theorem~\ref{th:general}.
\end{proof}

The proof of Theorem~\ref{th:general} produces a~\emph{non-normal} solution of \eqref{eq:de2} 
under the restriction $A\in H^\infty_2$. See \cite[Theorem~3]{G:2016} for another example. 
To show that $f=ge^h$ in the proof of Theorem~\ref{th:general} is non-normal, we argue as follows.
For any $\xi\in\partial\D$, there exists a subsequence 
$\Lambda' = \Lambda'(\xi)$ of $\Lambda^\star$ which converges non-tangentially to $\xi$. 
This follows from \cite[Corollary, p.~188]{DS:2004} as $D^-(\Lambda^\star)>0$.
The Makarov law of the iterated logarithm \cite[Theorem~8.10]{P:1992} gives
\begin{equation} \label{eq:makarov}
  \limsup_{r\to 1^-} \, \frac{|h(r\xi)|}{\Psi(r)} \leq \nm{h'}_{H^\infty_1}, \quad \Psi(r) 
  = \sqrt{\log\frac{1}{1-r} \log\log\log\frac{1}{1-r}},
\end{equation}
for almost every $\xi\in\partial\D$.
Fix $\xi\in\partial\D$ such that \eqref{eq:makarov} holds, and let $\Lambda' = \Lambda'(\xi)$
be the corresponding subsequence of $\Lambda^\star$. By \cite[Proposition~1, p.~43]{DS:2004},
\begin{equation} \label{eq:dest}
  | h(z_n) | \leq \big| h(|z_n| \xi) \big| + \frac{\nm{h'}_{H^\infty_1}}{2}\, 
  \log\frac{1+\varrho(z_n,|z_n| \xi)}{1-\varrho(z_n,|z_n| \xi)},
  \quad z_n\in\Lambda',
\end{equation}
where $\sup\{ \varrho(z_n,|z_n| \xi) : z_n\in\Lambda' \} < 1$.
If $z_n\in \Lambda'$ and $|z_n|$ is sufficiently close to one, then
\begin{align*}
  (1-|z_n|^2) \, |f'(z_n)| 
  & = (1-|z_n|^2) |g'(z_n)| \, \exp\!\big(\text{Re\ } h(z_n)\big)\\
  & \gtrsim  \frac{1}{(1-|z_n|^2)^\beta} \, \exp\!\Big( - \big(\nm{h'}_{H^\infty_1}+1\big) \,\Psi(|z_n|) \Big)
\end{align*}
by \eqref{eq:ope}, \eqref{eq:makarov} and \eqref{eq:dest}.
It follows that $\sup_{z_n\in\Lambda^\star} \, (1-|z_n|^2) \, |f'(z_n)| = \infty$, and 
hence $f$ is non-normal by \cite[Proposition~7]{GNR:preprint}.

%%%%%%%%%%%%%%%%%%%%%%
%%%% ---- PROOF ---- %%%%
%%%%%%%%%%%%%%%%%%%%%%

\begin{proof}[Proof of Theorem~\ref{th:qp}]
Let $0<p\leq 1$, and let $\Lambda\subset\D$ be any separated sequence such that \eqref{eq:seqcarleson}
is a $p$-Carleson measure. If $B$ is a Blaschke product whose zero-sequence is $\Lambda$, then
$B\in Q_p \cap H^\infty$ by \cite[Theorem~2.2]{EX:1997}; the case $p=1$ is of course trivial,
since $Q_1 \cap H^\infty = H^\infty$. Now
\begin{equation*}
\inf_{z_n\in\Lambda} \, (1-|z_n|^2)|B'(z_n)|
 = \inf_{z_n\in\Lambda} \, \prod_{z_k\neq z_n} \left| \frac{z_k-z_n}{1-\overline{z}_k z_n} \right| >0
\end{equation*}
by the uniform separation of $\Lambda$, and hence
\begin{equation*}
\sup_{z_n\in\Lambda} \, \frac{\left| B''(z_n) \right|}{\left| B'(z_n) \right|^2} < \infty.
\end{equation*}
Define $f=Be^h$, where $h=Bk$ and 
$k\in\mathcal{H}(\D)$ is a solution to the interpolation problem 
\begin{equation*}
k(z_n) = -\frac{B''(z_n)}{2 \left( B'(z_n) \right)^2}, \quad z_n\in\Lambda. 
\end{equation*}
By \cite[Theorem~1.3]{NX:1997}, where the condition (b) follows from the fact that
\eqref{eq:seqcarleson} is a~$p$-Carleson measure,
or \cite[Theorem~3]{C:1962} if $p=1$, 
we may assume that $k\in Q_p\cap H^\infty$.
Then, as in the proof of Theorem~\ref{th:general}, it follows that 
$f$ is a~non-trivial solution of \eqref{eq:de2} where $A\in\mathcal{H}(\D)$ and 
\begin{equation} \label{eq:expanded2} 
  A = -\frac{f''}{f} = - \frac{B''+2B'h'}{B} - (h')^2 -h''.
\end{equation}
Since $Q_p\cap H^\infty$ is an~algebra, we have
$h\in Q_p\cap H^\infty$ and hence $f\in Q_p\cap H^\infty$.

It remains to prove that \eqref{eq:muap} is a $p$-Carleson measure.
Let $0<\delta<1$ be a sufficiently small constant such that the pseudo-hyperbolic discs
$\Delta(z_n,\delta)$ are pairwise disjoint for $z_n\in\Lambda$. 
We proceed to verify \eqref{eq:carleson} for $\mu=\mu_{A,p}$ in two parts. 
Denote $\Omega = \, \D \setminus \, \bigcup_{z_n\in\Lambda} \Delta(z_n,\delta)$.
Since $|B|$ is uniformly bounded away from zero on $\Omega$ and $h\in H^\infty$, 
we obtain
\begin{equation*} 
  |A(z)| = \frac{|f''(z)|}{|f(z)|} \lesssim |f''(z)| \, \big| e^{-h(z)} \big| \lesssim |f''(z)|,  \quad z\in \Omega.
\end{equation*}
Consequently,
\begin{align*} 
  & \sup_{a\in\D} \, \int_\Omega \left( \frac{1-\abs{a}^2}{|1-\overline{a}z|^2} \right)^p  d\mu_{A,p}(z) \\
 & \qquad \lesssim \sup_{a\in\D} \, \int_{\D} |f''(z)|^2(1-|z|^2)^{2+p} \left( \frac{1-\abs{a}^2}{|1-\overline{a}z|^2} \right)^p dm(z)
< \infty
\end{align*}
as $f\in Q_p$; see \cite[Theorem~3.2]{R:2003}.
Only the term $-(B''+2B'h')/B$ in \eqref{eq:expanded2} brings us additional trouble 
on $\D \setminus \Omega$, and hence it suffices to show that
\begin{equation*} 
  I = \sup_{a\in\D} \, \sum_{z_n\in\Lambda} \, \int_{\Delta(z_n,\delta)} 
  \left| \frac{B''(z)+2B'(z)\, h'(z)}{B(z)} \right|^2 \, \frac{(1-|z|^2)^{2+p} (1-\abs{a}^2)^p}{|1-\overline{a}z|^{2p}} \, dm(z)
\end{equation*}
is finite. Since $|B(z)|\gtrsim \varrho(z,z_n)$ for $z\in \Delta(z_n,\delta)$, 
and $|1-\overline{a}z| \simeq |1-\overline{a}z_n|$
for $z\in \Delta(z_n,\delta)$ and $a\in\D$ (with comparison constants
independent of $a$), 
\begin{equation*}
  I \lesssim \, \sup_{a\in\D} \, \sum_{z_n\in\Lambda} \frac{(1-|z_n|^2)^p(1-\abs{a}^2)^p}{|1-\overline{a}z_n|^{2p}} < \infty
\end{equation*}
by  Lemma~\ref{lemma:schwarz} and the fact that \eqref{eq:seqcarleson} is a $p$-Carleson measure. This 
completes the proof of Theorem~\ref{th:qp}.
\end{proof}

We may apply the Corona theorem for the algebra $Q_p\cap H^\infty$ to sharpen a~property in \cite{GHR:preprint}
(corresponding to the case $p=1$).
Let $0<p<1$. Assume that $f_1,f_2 \in Q_p \cap H^\infty$ are linearly independent solutions of \eqref{eq:de2} such that
\begin{equation} \label{eq:corona}
\inf_{z\in\D} \big( |f_1(z)| + |f_2(z)| \big) >0.
\end{equation}
By \cite[Theorem~1.1]{NX:1997} there exist $g_1,g_2\in Q_p \cap H^\infty$ such that $f_1g_1+f_2g_2 \equiv 1$.
Differentiate twice and apply \eqref{eq:de2} to conclude $A=f_1g_1'' + f_2 g_2'' + 2 (f_1'g_1'+f_2'g_2')$. 
Hence $d\mu_{A,p}(z) = |A(z)|^2 (1-|z|^2)^{2+p}\, dm(z)$ is a $p$-Carleson measure.
% by standard estimates.

To construct an example where \eqref{eq:corona} holds, we consider a method from \cite[p.~58]{H:2013}. 
Let $0<p<1$, and choose $g\in Q_p \cap H^\infty$. Define $h\in\mathcal{H}(\D)$ such that $h'=e^{-2g}$, 
which implies that $h\in Q_p \cap H^\infty$ and $h''+2g'h'=0$.
Then, functions $e^{g+h}, e^{g-h} \in Q_p \cap H^\infty$ are zero-free linearly independent solutions of \eqref{eq:de2} where
$A=-g''-(g')^2-(h')^2\in \mathcal{H}(\D)$. Estimate \eqref{eq:corona} follows from the fact that both solutions are uniformly bounded
away from zero. In this case it is easy to verify that $d\mu_{A,p}$ is a~$p$-Carleson measure.

%%%%%%%%%%%%%%%%%%%%%%
%%%% ---- PROOF ---- %%%%
%%%%%%%%%%%%%%%%%%%%%%

\begin{proof}[Proof of Proposition~\ref{prop:1c}]
(a) Let $\Lambda\in\mathcal{Z}(H^\infty_2)$ and let $\tau$ be a conformal automorphism of~$\D$.
We need to prove that $\tau(\Lambda)\in\mathcal{Z}(H^\infty_2)$.
By assumption, there exists $A=A(\Lambda)\in H^\infty_2$ such that
\eqref{eq:de2} admits a~non-trivial solution $f$ whose zero-sequence is $\Lambda$.
Let $\phi$ be the inverse of~$\tau$. Consequently,
$g=(f \circ \varphi )\, (\varphi')^{-1/2}$ is a non-trivial solution of
\begin{equation*}
g''+Bg=0, \quad B=(A\circ \varphi) (\varphi')^2,
\end{equation*}
see \cite[Lemma~1]{P:1982}.
Since $g$ vanishes precisely on  $\tau(\Lambda)$,
and $\nm{B}_{H^\infty_2} =\nm{A}_{H^\infty_2}$ by standard estimates, 
we have $\tau(\Lambda)\in\mathcal{Z}(H^\infty_2)$. 

Note that (b) and the first part of (c) follow from Theorem~\ref{th:general}, \cite[Theorem~3]{S:1955}
and Corollary~\ref{cor:general}. Hence, it suffices to prove the second part of~(c).
Suppose that $\Lambda\in \mathcal{Z}(H^\infty_2)$
and \eqref{eq:dist} holds. Consequently, there exists a coefficient $A=A(\Lambda) \in H^\infty_2$
such that \eqref{eq:de2} admits a non-trivial solution $f$ whose zero-sequence is~$\Lambda$.
By \cite[Example~1]{P:1982}, $f\in H^\infty_\alpha$ for any sufficiently
large $\alpha=\alpha(\nm{A}_{H^\infty_2})<\infty$. According to \eqref{eq:dist} and
\cite[Theorem, p.~146]{K:2006}, $\Lambda$ is a~Blaschke set of $H^\infty_\alpha$,
and hence $\Lambda$ is a~Blaschke sequence.
\end{proof}

%%%%%%%%%%%%%%%%%%%%%%
%%%% ---- PROOF ---- %%%%
%%%%%%%%%%%%%%%%%%%%%%

\begin{proof}[Proof of Corollary~\ref{cor:normal}]
Let $\Lambda$ be a separated sequence having 
infinitely many points such that \eqref{eq:seqcarleson} is a $p$-Carleson measure, 
and let $f$ be the function in the proof of Theorem~\ref{th:qp}.
In particular, $f=Be^h$ is a non-trivial solution of \eqref{eq:de2}
where $A\in\mathcal{H}(\D)$ and \eqref{eq:muap} is a~$p$-Carleson measure.
Here $B\in Q_p \cap H^\infty$ is the Blaschke product corresponding to $\Lambda$ and $h\in Q_p \cap H^\infty$.

Let~$g$ be a solution of \eqref{eq:de2} which is
linearly independent to $f$. We may assume that the Wronskian determinant 
satisfies $W(f,g) = f g' - f'g\equiv 1$.
If we define $w=g/f$, then $w$ is a locally univalent 
meromorphic function in $\D$ such that $S_w=2A$ and $w' = 1/f^2$. Consequently,
$S_w\in\mathcal{H}(\D)$ and $|S_w(z)|^2(1-|z|^2)^{2+p}\, dm(z)$ is a $p$-Carleson measure.
It remains to show that $w$ is non-normal. Since $\Lambda$ is uniformly separated,
there exists a constant $0<\delta<1$ such that
$\delta <  (1-|z_n|^2)|B'(z_n)| \leq 1$ for all $z_n\in\Lambda$, and hence
\begin{equation*}
  |g(z_n)| = \frac{1}{|f'(z_n)|} = \frac{|e^{-h(z_n)}|}{|B'(z_n)|} \simeq 1-|z_n|^2, \quad z_n\in\Lambda.
\end{equation*}
We conclude that
\begin{equation*}
  (1-|z_n|^2)\, w^{\#}(z_n) = \frac{1-|z_n|^2}{|f(z_n)|^2 + |g(z_n)|^2} \simeq \frac{1}{1-|z_n|^2}, \quad z_n\in\Lambda,
\end{equation*}
which means that $w$ is non-normal. Finally, we point out that $w$ belongs to the Nevanlinna class
of meromorphic functions by \cite[Corollary~3]{GN:preprint} and \cite[Lemma~3]{H:2013}.
\end{proof}

%%%%%%%%%%%%%%%%%%%%%%%
%%%% ---- SECTION ---- %%%%
%%%%%%%%%%%%%%%%%%%%%%%

\section*{Acknowledgements}

The author thanks A.~Nicolau for helpful conversations, and
gratefully acknowledges the hospitality of \emph{Departament de Matem\`atiques}, 
\emph{Universitat Aut\`onoma de Barcelona}.

%%%%%%%%%%%%%%%%%%%%%%%%%%%%%%%%
%%%% ---- BIBLIOGRAPHY ---- %%%%
%%%%%%%%%%%%%%%%%%%%%%%%%%%%%%%%

\end{document}